\newtheorem{lemma}{Lemma}
\newtheorem{theorem}{Theorem}
\begin{document}

\title{Spacing and A Large Sieve Type Inequality for Roots of a Cubic Congruence}
\author{Matthew Welsh}
\date{\today}
\maketitle

\begin{abstract}
  Motivated by a desire to understand the distribution of roots of cubic congruences, we re-derive a parametrization of roots $\nu \pmod m$ of $X^3 \equiv 2 \pmod m$ found in \cite{Hooley1978}. Although this parametrization does not lead us here to anything towards proving equidistribution of the sequence $\frac{\nu}{m} \pmod 1$, we are able to prove spacing results, and then a large sieve type inequality, which we view as analogous to the large sieve inequality for roots of quadratic congruences found by Fouvry and Iwaniec \cite{FouvryIwaniec1997} in their proof that there are infinitely many primes of the form $n^2 + p^2$.

  The parametrization produces approximations, which are $\asymp m^{2/3}$-torsion points in $\mathbb{R}^2 / \mathbb{Z}^2$ within $O\left(\frac{1}{m} \right)$ of the point $\left( \frac{\nu}{m}, \frac{\nu^2}{m} \right)$. After a digression to characterize those torsion points having the statistically expected spacing, we prove the spacing property alluded to above: that at most a bounded number of the points $\left(\frac{\nu}{m}, \frac{\nu^2}{m}\right)$ with $m \asymp M$ can lie in any disc with radius $\frac{1}{M}$ in $\mathbb{R}^2 / \mathbb{Z}^2$.
\end{abstract}

\section{Introduction}
\label{sec:1}

The study of the distribution of roots of quadratic congruences provides, in our view, one of the most spectacular applications of the spectral theory of automorphic forms on $SL_2(\mathbb{Z}) \backslash SL_2(\mathbb{R})$ to questions in arithmetic. From its initiation in \cite{Hooley1963}, where Hooley incidentally proved the equidistribution of the sequence
\begin{equation*}
  \left\{ \frac{\nu}{m}\in \mathbb{R} / \mathbb{Z} \ :\ \nu^2 + D \equiv 0 \pmod m \right\}
\end{equation*}
towards obtaining an asymptotic with power savings in the error for the average of the divisor function at values of the quadratic polynomial $n^2 + D$, one already sees the role estimates for sums of Kloosterman sums plays in this study. But even before the entrance of the spectral theory of automorphic forms, one sees a number of other fantastic results that depend in a significant way on estimates for the Weyl sums
\begin{equation*}
  \sum_{m\leq x} \sum_{f(v)\equiv 0 (m)} e\left( \frac{h\nu}{m} \right), \quad h\in \mathbb{Z},
\end{equation*}
with $f$ a fixed quadratic polynomial. For example \cite{Iwaniec1978}, where Iwaniec proves that $n^2 + 1$ is infinitely often a prime or product of two primes.

In \cite{Hooley1963}, Hooley produced the bound $\ll_h x^{3/4}(\log x)^2$ for the Weyl sum above (with $h\neq 0$, of course) using the Weil bound for Kloosterman sums. The introduction of the spectral theory of automorphic forms in \cite{Bykovskii1987} produced (for $D > 0$) the bound $\ll_h x^{2/3} \log x $, or $\ll_h x^{1/2}(\log x)^2$ for a smooth version of the Weyl sum, which is best possible, by relating the smoothed version to a Poincar\'{e} series on $\Gamma \backslash \mathbb{H}$, which was then estimated by its spectral expansion and bounds on the Fourier coefficients of automorphic forms. We also mention \cite{Hejhal1984}, which, by a different but similar method, produces bounds of the same strength for specific examples of $D$ (all negative, interestingly). This strategy matured in \cite{DukeFriedlanderIwaniec1995}, where Duke, Friedlander, and Iwaniec extended the above Weyl sum to one with the condition $m\equiv 0 \pmod d$, which they related to a Poincar\'{e} series on $\Gamma_0(d) \backslash \mathbb{H}$, and produced estimates with enough uniformity in $h$ and $d$ to use a sieve idea, coupled with bilinear forms techniques, to prove equidistribution of the sequence
\begin{equation*}
  \left\{ \frac{\nu}{p} \in \mathbb{R} / \mathbb{Z} \ :\ \nu^2 + D \equiv 0 \pmod p \right\},
\end{equation*}
where $p$ is a prime number and $D > 0$.

In contrast to the previously mentioned results \cite{Hooley1963} and \cite{Iwaniec1978} that did not use the spectral theory of automorphic forms, those of the preceding paragraph did not directly transform to Kloosterman sums, leading to the restriction $D > 0$. It was in \cite{Toth2000} that Toth removed the restriction the $D > 0$ from the equidistribution of roots of quadratic congruences to prime moduli in \cite{DukeFriedlanderIwaniec1995} by doing exactly that.

\bigskip

More recent years have seen significant development of the spectral theory of automorphic forms on $SL_3(\mathbb{Z}) \backslash SL_3(\mathbb{R})$, see, for example, the Kuznetsov-like trace formula \cite{Li2010}, \cite{Buttcane2012}, and \cite{Blomer2013}. And while this spectral theory has seen some great applications, see, in addition, \cite{Blomer2017}, to our knowledge direct applications to arithmetic have been limited. Although there is hope, see the introduction to \cite{Buttcane2012} and the introduction to section 4.1 of \cite{Terras1988}, that the future will see applications to questions of a cubic nature in much the same way that the spectral theory of $SL_2(\mathbb{Z}) \backslash SL_2(\mathbb{R})$ has been so fruitful to questions of a quadratic nature, the discussion above being a prime example.

It was in this spirit that we undertook the study of the distribution of cubic congruences, that is with the hope that we might see the spectral theory of $SL_3(\mathbb{Z}) \backslash SL_3(\mathbb{R})$ become applicable. And despite some positive results from our investigations, those in this paper and some others we defer to a future work, it seems to us that, when it comes to the distribution of roots of cubic congruences, this hope may have been misplaced. We will indicate briefly why we believe this below. 

But before continuing with a summary of our work here, let us remark that the equidistribution of the sequence
\begin{equation*}
  \left\{ \frac{\nu}{m} \in \mathbb{R} / \mathbb{Z} \ :\ f(\nu) \equiv 0 \pmod m \right\},
\end{equation*}
with $f$ any irreducible, integral polynomial was proven by Hooley in \cite{Hooley1964}. The estimate for the Weyl sum obtained there, however, is far too weak for any applications along the lines of those in \cite{Hooley1963} or \cite{Iwaniec1978}, as pointed out in \cite{Hooley1964}. For these applications, we imagine one would need at least a power savings.

We also remark that, for $f(X) = X^3 - 2$, the setting in which we will concern ourselves in what follows, another work of Hooley's, \cite{Hooley1978}, has touched on the Weyl sums
\begin{equation*}
  \sum_{m\leq x} \sum_{\nu^3 \equiv 2 (m)} e \left( \frac{h\nu}{m} \right), \quad h\in \mathbb{Z},
\end{equation*}
which we consider our ultimate goal, even though we do not make any progress towards an estimation here. Indeed, in \cite{Hooley1978}, where Hooley develops his $R^*$ conjecture in the course of investigating the largest prime factor of $n^3 + 2$, a parametrization of $m$ and $\nu$ with $\nu^3 \equiv 2 \pmod m$ is given. We obtain this same parametrization by another means in section \ref{sec:parametrization}, which we include not because we developed it without knowledge of \cite{Hooley1978}, but rather because we feel that our method reveals something more, both by being more closely analogous to a method of parametrizing roots of quadratic congruences that fits beautifully with \cite{Bykovskii1987}, \cite{Hejhal1984}, \cite{DukeFriedlanderIwaniec1995}, and \cite{Toth2000}, and also by suggesting an approach to the Weyl sum above. We do not pursue this approach here, but defer also to a future work.

\bigskip

Moving on to an overview of this work, the main result of section \ref{sec:correspondence} is lemma \ref{lemma:correspondence}, a correspondence between roots of $X^3 \equiv 2 \pmod m$ and ideals $I$ in $\mathbb{Q}(2^{1/3})$ such that $\mathbb{Z}[2^{1/3}] / I \cong \mathbb{Z}/m\mathbb{Z}$ as abelian groups. We have chosen this particular congruence to simplify and concretize calculation; some substantive properties we use are that the number field $\mathbb{Q}(2^{1/3})$ has class number one, and that its ring of integers $\mathbb{Z}[2^{1/3}]$ has a monogenic $\mathbb{Z}$-basis $1$, $2^{1/3}$, $2^{2/3}$. We do not believe these properties are essential, and it is our view that it is clear, however technically daunting, how to proceed without them. Somewhat more essential in our view is that $\mathbb{Q}(2^{1/3})$ is a pure cubic field, which plays a fairly crucial role in picking a fundamental domain in section \ref{sec:fundamentaldomain} for the action of the units, see the comments following (\ref{eq:30}).

Returning to section \ref{sec:correspondence}, we remark that such a correspondence with these ideals $I$ with $\mathbb{Z}[2^{1/3}] / I$ cyclic, which we call primitive, can be predicted by Dedekind's primitive ideal theorem, for example. We also note that primitive ideals can be characterized as those ideals $I$ only divisible by degree one prime ideals $\mathfrak{p}$ with the additional constraint that if $\mathfrak{p} \mid I$, then none of the conjugates of $\mathfrak{p}$ divide $I$. This condition arises in the quadratic case as well, however in this setting the condition becomes much simpler: that the ideal is not divisible by any rational integer.

In section \ref{sec:parametrization} we use the correspondence between roots of $\nu^3 \equiv 2 \pmod m$ and primitive ideals to parametrize the former. The result is lemma \ref{lemma:parametrization}. The parametrization basically follows by writing the ideal $I$ as $(\alpha)$ for some $\alpha \in \mathbb{Z}[2^{1/3}]$. Here we are using crucially that $\mathbb{Z}[2^{1/3}]$ has class number one, however one can imagine how to proceed if this is not the case, although we do not do so here: one might break the ideals into their classes and parametrize each separately as $(\alpha)I_j$, with $\alpha \in I_j^{-1}$ and $I_j$ running over some fixed set of representatives of the classes.

As previously mentioned, this parametrization is more or less that of \cite{Hooley1978}. But with our point of view, one begins to suspect that an analogue of writing Weyl sums for roots of quadratic congruences as Poincar\'{e} series for $\Gamma \backslash \mathbb{H}$ will not hold in the cubic setting. Indeed, matrices having the form of $\gamma$ in (\ref{eq:14}) project to a very small subset of cosets in
\begin{equation*}
  \begin{pmatrix} 1 & 0 & 0 \\ * & 1 & 0 \\ * & * & 1 \end{pmatrix} \backslash SL_3(\mathbb{Z}).
\end{equation*}
This is in contrast to the quadratic setting, where any coset of $\begin{pmatrix} 1 & 0 \\ * & 1 \end{pmatrix} \backslash SL_2(\mathbb{Z})$ will give an analogue to the matrix $\gamma$. There, this is the property that allows one to express a sum over roots of quadratic congruences as a sum over these cosets, and hence provides key step towards writing the Weyl sums as a Poincar\'{e} series.

One can somewhat thicken the kinds of allowable $\gamma$, but only slightly, by removing the requirement that the ideal generated by $\alpha$ is primitive. These ideals will no longer correspond to $\nu^3 \equiv 2 \pmod m$, but rather, it turns out, they will usually correspond to pairs of roots, usually with different moduli. This thickening however, still does not lead to a sum over all the cosets of $SL_3(\mathbb{Z})$, but still a certain a subset of them which one can parametrize by certain ideals in a quadratic field. We will pursue this in future work. 

Despite this apparent setback in our study of the distribution of roots of cubic congruences, we proceed as one would in many of the works cited above. Specifically, we find an approximation to the root $\frac{\nu}{m}$ analogous to the approximation used in \cite{Hooley1963} and \cite{Iwaniec1978}. In our view, this approximation is of crucial importance to the work of Toth \cite{Toth2000}. Indeed, it seems to us that this approximation allows Toth to utilize the Bruhat (double coset) decomposition of $SL_2(\mathbb{Z})$ to transform the Poincar\'{e} series coming from the Weyl sum for quadratic congruences, thus, much like the computation of the Fourier expansion of a standard Poincar\'{e} series, producing Kloosterman sums. 

In section \ref{sec:approximation} we use the parametrization of section \ref{sec:parametrization} to find approximations to the root $\frac{\nu}{m}$. Originally we found the approximation by performing a Bruhat decomposition on the matrix $\gamma$, in the spirit of how one obtains the Fourier expansion of Poincar\'{e} series on $SL_3(\mathbb{Z}) \backslash SL_3(\mathbb{R})$, see \cite{BumpFriedbergGoldfeld1988}. However, in section \ref{sec:approximation} we present an alternative method for producing the approximation, mainly because the method presented provides more insight into the following sections, which deviate significantly from what we have discussed so far.

Even with the obstruction to creating a Poincar\'{e} series for $SL_3(\mathbb{Z}) \backslash SL_3(\mathbb{R})$, we did not find producing an approximation to $\frac{\nu}{m}$ to be a pointless exercise. Inspired by \cite{FouvryIwaniec1997}, where Fouvry and Iwaniec used the approximation to roots of quadratic congruences to derive spacing results, and hence a large sieve inequality, for these roots along the way to proving that there are infinitely many primes of the form $n^2 + p^2$ (see also \cite{FriedlanderIwaniec1998}, where the spacing property is also crucially used]), we can investigate spacing properties of roots of our cubic congruence.

Although much coarser than equidistribution, spacing statistics are not to be overlooked, as they can provide information, albeit less, at much finer scales than one could hope for equidistribution. For an easy illustration, consider the set of all fractions $\frac{a}{q} \in \mathbb{R} / \mathbb{Z}$ with $q\leq Q$. Even using the Riemann hypothesis to bound the corresponding Weyl sum, which here are sums of Ramanujan sums, one can only get equidistribution to scales a bit larger than $\frac{1}{Q}$, for the simple reason that the interval $\left( 0, \frac{1}{Q} \right)$ contains none of these fractions. But still, one can easily show that these fractions are spaced by $\geq \frac{1}{Q^2}$, information at a much smaller scale.

The result of sections \ref{sec:approximation} and \ref{sec:fundamentaldomain}, where, as mentioned above, we make use of the fact that $\mathbb{Q}(2^{1/3})$ is a pure cubic field to pick a fundamental domain for the action of the units, is theorem \ref{theorem:approximation}. The approximations to $\frac{\nu}{m}$ there, while succeeding in the sense that we obtain a good error term, $O\left( \frac{1}{m}\right)$, and a smaller denominator, size $\asymp m^{2/3}$, are weak in the sense that one should typically be able to find a fraction with much smaller denominator, $\ll m^{1/2}$, within the range of our error term. Even more, the numerator and denominator of our approximations depend on each other in a fairly complicated manner, a fact we see as related to the previously discussed thin set of cosets that correspond to roots. Neither of these weaknesses occur for the approximations in the quadratic setting.

Nonetheless, we observe a new phenomenon from the quadratic setting: the method for producing approximations to $\frac{\nu}{m}$ simultaneously produces approximations to $\frac{\nu^2}{m}$ with the same denominator. Taking this new information into account, we see that our method, just as in the quadratic case, produces an optimal result in a certain  sense.

In section \ref{sec:spacing} we first make a small digression to discuss the spacing between points $\left( \frac{r}{q}, \frac{s}{q}\right)$, which we can interpret as $q$-torsion points in $\mathbb{R}^2/\mathbb{Z}^2$, with the goal of using our approximation in theorem \ref{theorem:approximation} to prove spacing results between the points $\left( \frac{\nu}{m}, \frac{\nu^2}{m}\right) \in \mathbb{R}^2 / \mathbb{Z}^2$. The result of this digression is lemma \ref{lemma:spacingtorsionpoints}, which says that the spacing between torsion points is controlled by the sizes of the coefficients of the integral lines passing through the point -- points lying on lines with small coefficients, for example the diagonal $X = Y$, being allowed to be much closer together.

Naturally, we close section \ref{sec:spacing} by proving that the approximations found in theorem \ref{theorem:approximation} do not lie on any lines with small coefficients, thus leading to theorem \ref{theorem:spacingroots}, the cause for half the title of this paper. The theorem states that only a bounded number of the points $\left( \frac{\nu}{m}, \frac{\nu^2}{m} \right)$ with $m \asymp M$ can be inside a disc of radius $\frac{1}{M}$ in $\mathbb{R}^2 / \mathbb{Z}^2$.

We close the paper in section \ref{sec:largesieve} by using theorem \ref{theorem:spacingroots} to derive a large sieve type inequality in the spirit of \cite{FouvryIwaniec1997}. Despite the transformations used in going from spacing to a large sieve inequality being completely standard, we record them here. We remark that although the resulting inequality in theorem \ref{theorem:largesieve} is optimal in certain ranges, it gives nothing nontrivial in others -- the novelty of our inequality is the range in which it is optimal.

{\bf Acknowledgments}. I would like the thank Steve Miller for his encouragement at the very beginning of this research, and my advisor, Henryk Iwaniec, for his advice and guidance throughout. I would also like to Ram Murty, who, after attending a presentation of this work at the {\em Strength in Numbers} conference, introduced me to \cite{Hooley1978}, which contains the results of the first few sections here. 

\section{Correspondence between Roots and Primitive Ideals}
\label{sec:correspondence}

In this section we establish an explicit correspondence between certain ideals in $\mathcal{O} = \mathbb{Z}[2^{1/3}]$ and roots of the cubic congruence $x^3 \equiv 2 \pmod m$. 

Let $I$ be an ideal in $\mathcal{O}$. Fixing the $\mathbb{Z}$-basis $\{1, 2^{1/3}, 2^{2/3}\}$ of $\mathcal{O}$, we pick the unique $\mathbb{Z}$-basis $\{\omega_1, \omega_2, \omega_3\}$ of $I$ so that
\begin{equation}
  \label{eq:1}
  \begin{pmatrix} \omega_1 \\ \omega_2 \\ \omega_3 \end{pmatrix} = A \begin{pmatrix} 1 \\ 2^{1/3} \\ 2^{2/3} \end{pmatrix},
\end{equation}
with $A$ an integer matrix in Hermite normal form, which is to say that
\begin{equation}
  \label{eq:2}
  A = \begin{pmatrix} a_{11} & 0 & 0 \\ a_{21} & a_{22} & 0 \\ a_{31} & a_{32} & a_{33} \end{pmatrix}
\end{equation}
with $a_{11}, a_{22}, a_{33} > 0$ and $0\leq a_{21}, a_{31} < a_{11}$, $0\leq a_{32} < a_{22}$.

Since $2^{1/3}$ generates $\mathcal{O}$, $I$ being an ideal is equivalent to $2^{1/3}I$ being a sublattice of $I$. In other words, we need $2^{1/3}$ to act by an integral matrix with respect to the basis (\ref{eq:1}). And since
\begin{equation}
  \label{eq:3}
  2^{1/3} \begin{pmatrix} 1 \\ 2^{1/3} \\ 2^{2/3} \end{pmatrix} = \begin{pmatrix} 0 & 1 & 0 \\ 0 & 0 & 1 \\ 2 & 0 & 0 \end{pmatrix} \begin{pmatrix} 1 \\ 2^{1/3} \\ 2^{2/3} \end{pmatrix},
\end{equation}
we are asking for
\begin{equation}
  \label{eq:4}
  A \begin{pmatrix} 0 & 1 & 0 \\ 0 & 0 & 1 \\ 2 & 0 & 0 \end{pmatrix} A^{-1} = \begin{pmatrix} -\frac{a_{21}}{a_{22}} & \frac{a_{11}}{a_{22}} & 0 \\ -\frac{a_{21}^2}{a_{11}a_{22}} + \frac{a_{21}a_{32}}{a_{11}a_{33}} -\frac{a_{22}a_{31}}{a_{11}a_{33}} & \frac{a_{21}}{a_{22}} - \frac{a_{32}}{a_{33}} & \frac{a_{22}}{a_{33}} \\ 2\frac{a_{33}}{a_{11}} - \frac{a_{21}a_{31}}{a_{11}a_{22}} + \frac{a_{21}a_{32}^2}{a_{11}a_{22}a_{33}} - \frac{a_{31}a_{32}}{a_{11}a_{33}} & \frac{a_{31}}{a_{22}} - \frac{a_{32}^2}{a_{22}a_{33}} & \frac{a_{32}}{a_{33}} \end{pmatrix}  
\end{equation}
to be an integer matrix.

From the $(1,2)$ and $(2,1)$ entries, we see that $a_{33} \mid a_{22}$ and $a_{22} \mid a_{11}$. Moreover, from the $(1,1)$ and $(3,3)$ entries, $a_{22}\mid a_{21}$ and $a_{33} \mid a_{32}$. And finally, from the $(3,2)$ entry, $a_{33}\mid a_{31}$. These facts together show that $A$ can be put into Smith normal form by multiplying on the right by a lower triangular matrix in $SL_3(\mathbb{Z})$. Whence $a_{33}$, $a_{22}$, and $a_{11}$ are the invariant factors of $\mathcal{O}/I$. We remark that this fact, that the diagonal entries in Hermite normal form are the invariant factors, does not hold for general lattices, and depend here on the ideal structure. 

At this point it is natural to make the assumption that $\mathcal{O}/I$ is cyclic (we call $I$ primitive in this case), so $a_{33} = a_{22} = 1$ and $a_{11} = N(I) = m$, say. Now $a_{32} = 0$ and we have
\begin{equation}
  \label{eq:5}
  A \begin{pmatrix} 0 & 1 & 0 \\ 0 & 0 & 1 \\ 2 & 0 & 0 \end{pmatrix} A^{-1} = \begin{pmatrix} - a_{21} & m & 0 \\ -\frac{a_{21}^2 + a_{31}}{m} & a_{21} & 1 \\ \frac{2 - a_{21}a_{31}}{m} & a_{31} & 0 \end{pmatrix}. 
\end{equation}
For this matrix to be integral, we need
\begin{equation}
  \label{eq:6}
  \begin{split}
    a_{31} & \equiv -a_{21}^2 \pmod m, \\
    a_{21}a_{31} & \equiv 2 \pmod m. \\
  \end{split}
\end{equation}

Substituting the first congruence into the second gives $a_{21} \equiv - \nu \pmod m$ with $\nu$ a root of $X^3 \equiv 2 \pmod m$, and then $a_{31} \equiv -\nu^2$.

We have obtained
\begin{lemma}
  \label{lemma:correspondence}
  Let $I$ be a primitive ideal of $\mathcal{O}$. Then $I$ has a unique basis of the form
  \begin{equation}
    \label{eq:7}
    \begin{pmatrix} \omega_1 \\ \omega_2 \\ \omega_3 \end{pmatrix} = \begin{pmatrix} m & 0 & 0 \\ -\nu & 1 & 0 \\ -\nu^2 & 0 & 1 \end{pmatrix} \begin{pmatrix} 1 \\ 2^{1/3} \\ 2^{2/3} \end{pmatrix} 
  \end{equation}
  where $\nu$ is a root of the congruence $X^3 \equiv 2 \pmod m$. Here, for the uniqueness to hold, $\nu$ and $\nu^2$ are considered as residue classes $\pmod m$.
  
  Conversely, given $m$ and $\nu \pmod m$ such that $\nu^3 \equiv 2 \pmod m$, the lattice with basis given by (\ref{eq:7}) is a primitive ideal of $\mathcal{O}$. 
\end{lemma}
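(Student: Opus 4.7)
The plan is to package the calculations already carried out between (\ref{eq:1}) and (\ref{eq:6}) into a clean equivalence, then supply the one remaining ingredient, namely the converse direction. Most of the forward direction has essentially been done above, so the proof should really just consist of (i) specialising those computations under the primitivity hypothesis, (ii) checking the uniqueness, and (iii) reversing the calculation.

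For the forward direction, I would let $I$ be primitive and invoke the Hermite normal form basis (\ref{eq:1})--(\ref{eq:2}). Because $\mathcal{O}/I$ is cyclic, its invariant factors are $1, 1, m$, and the identification of the diagonal entries of $A$ with the invariant factors (already established above) forces $a_{33}=a_{22}=1$ and $a_{11}=m$. The Hermite normalisation then forces $a_{32}=0$ and $0\le a_{21},a_{31}<m$. Substituting into (\ref{eq:5}) and demanding integrality gives the congruences (\ref{eq:6}), and combining them as in the paragraph preceding the lemma yields $a_{21}\equiv -\nu\pmod{m}$ and $a_{31}\equiv -\nu^{2}\pmod{m}$ with $\nu^{3}\equiv 2\pmod{m}$. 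Uniqueness of the basis (\ref{eq:7}) is then immediate: the Hermite normal form is unique, so $a_{21}$ and $a_{31}$ are the unique representatives of $-\nu\pmod{m}$ and $-\nu^{2}\pmod{m}$ in $\{0,\dots,m-1\}$; interpreting $\nu$ and $\nu^{2}$ as residue classes mod $m$ removes the choice of representative.

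For the converse, I would start from a pair $(m,\nu)$ with $\nu^{3}\equiv 2\pmod{m}$ and define $I$ to be the $\mathbb{Z}$-lattice with basis (\ref{eq:7}). To show $I$ is an ideal it suffices, since $2^{1/3}$ generates $\mathcal{O}$ as a $\mathbb{Z}$-algebra, to verify that multiplication by $2^{1/3}$ sends $I$ into $I$, i.e.\ that the matrix on the right of (\ref{eq:5}) has integer entries. This is exactly where one uses the hypothesis: $a_{21}=-\nu$ and $a_{31}=-\nu^{2}$ (taking any integer lifts) give
\begin{equation*}
  -\frac{a_{21}^{2}+a_{31}}{m}=-\frac{\nu^{2}-\nu^{2}}{m}=0,\qquad \frac{2-a_{21}a_{31}}{m}=\frac{2-\nu^{3}}{m}\in\mathbb{Z},
\end{equation*}
and all other entries of (\ref{eq:5}) are integers by inspection. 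Hence $I$ is an ideal, and since its invariant factors are $1,1,m$, the quotient $\mathcal{O}/I\cong\mathbb{Z}/m\mathbb{Z}$ is cyclic, so $I$ is primitive.

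There is no genuine obstacle here; the only delicate point is the bookkeeping in the uniqueness statement, where one must remember that the Hermite normal form pins down $a_{21},a_{31}$ as integers in $[0,m)$, and therefore fixes $\nu$ only as a residue class mod $m$ (and likewise $\nu^{2}$, which is of course determined by $\nu\bmod m$). Provided this is articulated carefully, the lemma reduces to the two short computations above.
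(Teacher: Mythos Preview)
Your proposal is correct and follows essentially the same route as the paper: the forward direction is precisely the computation already laid out in (\ref{eq:1})--(\ref{eq:6}), specialised under the primitivity hypothesis. Your additions---the explicit uniqueness argument via uniqueness of Hermite normal form, and the explicit verification of the converse by checking integrality of the matrix in (\ref{eq:5}) and reading off the invariant factors---supply details that the paper leaves implicit, but the underlying argument is the same.
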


\section{Parametrization of Roots}
\label{sec:parametrization}

Having established this explicit correspondence between primitive ideals $I$ of norm $m$ and roots of $\nu^3 \equiv 2 \pmod m$, we can now obtain a parametrization of such pairs $\nu$, $m$ by writing down different bases for the ideals $I$. The key observation here is that $\mathcal{O} = \mathbb{Z}[2^{1/3}]$ has class number $1$, so
\begin{equation}
  \label{eq:8}
  I = \left( a + b2^{1/3} + c 2^{2/3} \right) = (\alpha)
\end{equation}
for some integers $a$, $b$, and $c$, unique up to the action of the group of units
\begin{equation}
  \label{eq:9}
  \mathcal{U} = \langle -1, 1 + 2^{1/3} + 2^{2/3} \rangle.
\end{equation}

It follows that a natural $\mathbb{Z}$-basis of $I$ is
\begin{equation}
  \label{eq:10}
  \begin{pmatrix} \alpha \\ 2^{1/3} \alpha \\ 2^{2/3} \alpha \end{pmatrix} = \begin{pmatrix} a & b & c \\ 2c & a & b \\ 2b & 2c & a \end{pmatrix} \begin{pmatrix} 1 \\ 2^{1/3} \\ 2^{2/3} \end{pmatrix}. 
\end{equation}
Comparing this basis with the one in (\ref{eq:7}), we see that there must be a matrix $\gamma \in SL_3(\mathbb{Z})$ such that
\begin{equation}
  \label{eq:11}
  \gamma \begin{pmatrix} m & 0 & 0 \\ -\nu & 1 & 0 \\ -\nu^2 & 0 & 1 \end{pmatrix} = \begin{pmatrix} a & b & c \\ 2c & a & b \\ 2b & 2c & a \end{pmatrix}.
\end{equation}
Note that, since $m > 0$, in order to have $\det \gamma = 1$, we are requiring that
\begin{equation}
  \label{eq:12}
  \det \begin{pmatrix} a & b & c \\ 2c & a & b \\ 2b & 2c & a \end{pmatrix} = a^3 + 2b^3 + 4c^3 - 6abc > 0,
\end{equation}
which we may assume by replacing $\alpha$ with $-\alpha$ if necessary (later we will in fact choose $\alpha$ to be in a specific fundamental domain of the action of the units, where it is easily verified that (\ref{eq:12}) holds, see section \ref{sec:fundamentaldomain}). 

We note that since
\begin{equation}
  \label{eq:13}
  \gamma \begin{pmatrix} 0 & 0 \\ 1 & 0 \\ 0 & 1 \end{pmatrix} = \begin{pmatrix} b & c \\ a & b \\ 2c & a \end{pmatrix},
\end{equation}
we have
\begin{equation}
  \label{eq:14}
  \gamma = \begin{pmatrix} u & b & c \\ v & a & b \\ w & 2c & a \end{pmatrix} 
\end{equation}
for some integers $u$, $v$, and $w$, which are determined up to multiplication on the right by matrices of the form
\begin{equation}
  \label{eq:15}
  \begin{pmatrix} 1 & 0 & 0 \\ * & 1 & 0 \\ * & 0 & 1 \end{pmatrix}.
\end{equation}

For the matrix (\ref{eq:14}) to have determinant $1$, we must have
\begin{equation}
  \label{eq:16}
  u(a^2 - 2bc) + v(2c^2 - ab) + w(b^2 - ac) = 1.
\end{equation}
In particular
\begin{equation}
  \label{eq:17}
  gcd\left( a^2 - 2bc, 2c^2 - ab, b^2 - ac \right) = 1,
\end{equation}
which gives a criterion for the ideal $I$ to be primitive in terms of its generator.

Now
\begin{equation}
  \label{eq:18}
  \gamma^{-1} = \begin{pmatrix} a^2 - 2bc & 2c^2 - ab & b^2 - ac \\ bw - av & au - cw & cv - bu \\ 2cv - aw & bw - 2cu & au - bv \end{pmatrix},
\end{equation}
so multiplying the matrix (\ref{eq:10}) by $\gamma^{-1}$ and equating it to the matrix (\ref{eq:7}), we obtain
\begin{equation}
  \label{eq:19}
  \begin{split}
    m & = a^3 + 2b^3 + 4c^3 - 6abc \\
    -\nu & \equiv a(bw - av) + 2c(au-cw) + 2b(cv - bu) \pmod m.
  \end{split}
\end{equation}
Note that the first line of (\ref{eq:19}) is no surprise: the right side is just the norm-form for $\mathcal{O}$. Also note that the choice of $u$, $v$, and $w$ corresponds to multiplying the matrix (\ref{eq:18}) on the left by matrices of the form (\ref{eq:15}). In other words, the choice of $u$, $v$, and $w$, corresponds to the choice of representatives for $\nu$ and $\nu^2 \pmod m$. 

We summarize what we have obtained as follows
\begin{lemma}
  \label{lemma:parametrization}
  Let $a$, $b$, $c$ be any integers satisfying (\ref{eq:17}) and (\ref{eq:12}). Let $u$, $v$, and $w$ be integers satisfying
  \begin{equation}
    \label{eq:20}
    u(a^2 - 2bc) + v(2c^2 - ab) + w(b^2 - ac) = 1.
  \end{equation}
  Then
  \begin{equation}
    \label{eq:21}
    \begin{split}
      m & = a^3 + 2b^3 + 4c^3 - 6abc \\
      -\nu & = a(bw - av) + 2c(au-cw) + 2b(cv - bu),
    \end{split}
  \end{equation}
  satisfy $\nu^3 \equiv 2 \pmod m$.

  Moreover, $a$, $b$, and $c$ for which the $a + b 2^{1/3} + c 2^{2/3}$ lie in different orbits of the action of $\mathcal{U}$ on $\mathcal{O}$ give distinct pairs $m$ and $\nu \pmod m$. 
\end{lemma}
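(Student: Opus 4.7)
The plan is to reverse-engineer the setup of sections \ref{sec:correspondence} and \ref{sec:parametrization} by setting $\alpha = a + b 2^{1/3} + c 2^{2/3}$ and $I = (\alpha)$, and reading the lemma as a statement about this $I$. First I would verify that condition (\ref{eq:17}) implies $I$ is a primitive ideal. The quantities $a^2 - 2bc$, $2c^2 - ab$, $b^2 - ac$ are the first-column cofactors of the matrix in (\ref{eq:10}), so (\ref{eq:20}) is solvable in integers precisely when (\ref{eq:17}) holds. Under these assumptions, together with (\ref{eq:12}), one assembles a $\gamma \in SL_3(\mathbb{Z})$ of the form (\ref{eq:14}); in particular the rows $(a,b,c), (2c,a,b), (2b,2c,a)$ extend to an integer basis of $\mathcal{O}$, so $\mathcal{O}/I$ is cyclic of order $m = N(\alpha) = a^3 + 2b^3 + 4c^3 - 6abc$, i.e.\ $I$ is primitive of norm $m$.

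Next I would apply lemma \ref{lemma:correspondence} to $I$. This produces a unique $\nu \pmod m$ with $\nu^3 \equiv 2 \pmod m$ and a canonical basis of $I$ as in (\ref{eq:7}). Since the basis (\ref{eq:10}) and the basis (\ref{eq:7}) span the same ideal, they are related by a matrix in $SL_3(\mathbb{Z})$, and by construction this matrix is exactly $\gamma$ in (\ref{eq:14}). Multiplying (\ref{eq:10}) on the left by $\gamma^{-1}$, which has the explicit form (\ref{eq:18}), and reading off the $(2,1)$ entry modulo $m$, recovers the formula for $-\nu$ in (\ref{eq:21}). At the same time this confirms independence of $\nu \pmod m$ from the choice of $u,v,w$ satisfying (\ref{eq:20}): any two such choices differ by multiplying $\gamma$ on the right by a matrix of the form (\ref{eq:15}), which alters the entries $-\nu$ and $-\nu^2$ of (\ref{eq:7}) only by integer multiples of $m$.

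For the uniqueness-up-to-units statement, suppose two triples $(a,b,c)$ and $(a',b',c')$ satisfying the hypotheses produce the same pair $(m, \nu \pmod m)$. Then by the uniqueness part of lemma \ref{lemma:correspondence} the ideals $(\alpha)$ and $(\alpha')$ agree, and since $\mathcal{O}$ is a PID the generators $\alpha$ and $\alpha'$ differ by a unit of $\mathcal{O}$, i.e.\ they lie in the same $\mathcal{U}$-orbit. Hence distinct orbits give distinct pairs $(m, \nu \pmod m)$.

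I do not foresee a substantive obstacle here; essentially every identity required has been recorded in the preceding discussion, and the lemma is a clean packaging of what has been shown. The one point worth singling out is the well-definedness of $\nu \pmod m$ with respect to the choice of $u,v,w$ solving (\ref{eq:20}), which I would address as in the second paragraph above.
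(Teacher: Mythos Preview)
Your proposal is correct and follows essentially the same route as the paper: the lemma is presented there as a summary of the discussion in section~\ref{sec:parametrization}, which builds $\gamma$ from (\ref{eq:14}) once (\ref{eq:17}) and (\ref{eq:12}) hold, reads off $m$ and $\nu$ from $\gamma^{-1}$ via (\ref{eq:18})--(\ref{eq:19}), and notes that the ambiguity in $u,v,w$ corresponds exactly to the choice of representatives of $\nu,\nu^2 \pmod m$ via (\ref{eq:15}). Your treatment of the distinctness clause (same pair $\Rightarrow$ same primitive ideal by lemma~\ref{lemma:correspondence} $\Rightarrow$ same $\mathcal{U}$-orbit since $\mathcal{O}$ is a PID) is also the intended one.
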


\section{Approximating the Roots}
\label{sec:approximation}

To obtain an approximation to $\frac{\nu}{m}$, we return to the equation (\ref{eq:11}) with $\gamma$ as in (\ref{eq:14}). We have
\begin{equation}
  \label{eq:22}
  \begin{pmatrix} u & b & c \\ v & a & b \\ w & 2c & a \end{pmatrix} \begin{pmatrix} m & 0 & 0 \\ -\nu & 1 & 0 \\ -\nu^2 & 0 & 1 \end{pmatrix} = \begin{pmatrix} a & b & c \\ 2c & a & b \\ 2b & 2c & a \end{pmatrix}.
\end{equation}

Examining this equality for the $(1,1)$ entry on the right-hand side, we obtain
\begin{equation}
  \label{eq:23}
  u - b \frac{\nu}{m} - c \frac{\nu^2}{m} = \frac{a}{m}
\end{equation}
upon dividing by $m$. We expect, and will ensure later by picking $\alpha = a + b2^{1/3} + c2^{2/3}$ in a specific fundamental domain for the action of the units, that $a \ll m^{1/3}$. Accordingly, we expect the right hand side of (\ref{eq:23}) to be small, specifically $\ll m^{-2/3}$. We can interpret this geometrically as the point $\left( \frac{\nu}{m}, \frac{\nu^2}{m}\right)$ lying close to the line $bX + cY = u$.

Similarly, by inspecting the $(2,1)$ and $(3,1)$ entries of the right side of (\ref{eq:22}), we expect that $\left(\frac{\nu}{m}, \frac{\nu^2}{m}\right)$ will also lie close to the lines $aX + bY = v$ and $2cX + aY = w$. Now, if the triangle with sides these three lines is not too obtuse, that is if the angles between pairs of these lines are neither too large nor small, then the point $\left( \frac{\nu}{m}, \frac{\nu^2}{m} \right)$ will be close to the intersection of each pair of lines, namely
\begin{equation}
  \label{eq:24}
  \left(\frac{bu-cv}{b^2 - ac}, \frac{bv - au}{b^2 -ac}\right), \; \left( \frac{cv - au}{2c^2 - ab}, \frac{2cu - bv}{2c^2 - ab}\right), \; \left(\frac{au - bv}{a^2 - 2bc}, \frac{av - 2cu}{a^2 - 2bc}\right).
\end{equation}

We can see this explicitly by solving for $\frac{\nu}{m}$ and $\frac{\nu^2}{m}$ in each of the pairs of the three equations coming from (\ref{eq:22}) as above. For example, we have
\begin{equation}
  \label{eq:25}
  \frac{1}{m} \begin{pmatrix} b & c \\ a & b \end{pmatrix} \begin{pmatrix} \nu \\ \nu^2 \end{pmatrix} = \begin{pmatrix} u \\ v \end{pmatrix} - \frac{1}{m}\begin{pmatrix} a \\ 2c \end{pmatrix},
\end{equation}
so
\begin{equation}
  \label{eq:26}
  \frac{1}{m} \begin{pmatrix} \nu \\ \nu^2 \end{pmatrix} = \frac{1}{b^2 - ac} \begin{pmatrix} bu - cv \\ bv - au \end{pmatrix} + \frac{1}{m(b^2 - ac)}\begin{pmatrix} 2c^2 - ab \\ a^2 - 2bc \end{pmatrix}.
\end{equation}
If we have that all of the quantities $b^2 -ac$, $2c^2 - ab$, and $2c^2 - ab$ are of the same size, which will be $\asymp m^{2/3}$, then we see that $\left(\frac{\nu}{m}, \frac{\nu^2}{m}\right)$ is indeed within $\ll \frac{1}{m}$ of the first intersection point listed in (\ref{eq:24}). We remark that, as the determinants of the matrices
\begin{equation}
  \label{eq:27}
  \begin{pmatrix} b & c \\ a & b \end{pmatrix}, \; \begin{pmatrix} 2c & a \\ b & c \end{pmatrix}, \; \begin{pmatrix} a & b \\ 2c & a \end{pmatrix},
\end{equation}
control on these quantities does indeed correspond to control on the size of the angles between the three pairs of lines mentioned above. 

Carrying this out for the other two pairs of equations shows that the other points in (\ref{eq:24}) approximate $\left(\frac{\nu}{m}, \frac{\nu^2}{m}\right)$ to $\ll \frac{1}{m}$, as long as $b^2 -ac$, $2c^2 - ab$, and $a^2 - 2bc$ are all of the same order of magnitude, $m^{2/3}$. In the following section we ensure this hypothesis by picking $\alpha = a + b2^{1/3} + c2^{2/3}$ to be in an appropriate fundamental domain for the action of the units, and also verify that for our choice $a$, $b$, and $c$ will all be $\ll m^{1/3}$, although this hypothesis turned out to be inessential to our work so far.


\section{Picking the Fundamental Domain}
\label{sec:fundamentaldomain}

Towards picking a fundamental domain for which $b^2 -ac$, $2c^2-ab$, and $a^2 - 2bc$ are all $\asymp m^{2/3}$, we begin by observing that
\begin{equation}
  \label{eq:28}
  (a^2 - 2bc) + (2c^2 - ab)2^{1/3} + (b^2 - ac)2^{2/3} = \frac{m}{\alpha},
\end{equation}
where, as before, $\alpha = a + b2^{1/3} + c2^{2/3}$. This can of course be verified directly by multiplying both sides by $\alpha$, but we can also see it also by work we have already done. Indeed, we've already seen, (\ref{eq:13}), that $\gamma$ shares the second and third columns of the matrix by which $\alpha$ acts on the basis $1$, $2^{1/3}$, and $2^{2/3}$, see (\ref{eq:10}). Moreover, the quantities $b^2 - ac$, $2c^2 - ab$, and $a^2 - 2bc$ came naturally in the previous section as the minors of $\gamma$ in these last columns. As such, they, after dividing by $m$, the determinant of the matrix (\ref{eq:10}), form the first row of the matrix by which $\alpha^{-1}$ acts of the basis $1$, $2^{1/3}$, $2^{2/3}$. From this, (\ref{eq:28}) follows.

Now, making use of (\ref{eq:28}), we can write
\begin{equation}
  \label{eq:29}
  \begin{split}
    a^2 - 2bc & = \frac{m}{3} \mathrm{Tr} (\alpha^{-1}), \\
    2c^2 - ab & = \frac{m}{6} \mathrm{Tr} (2^{2/3}\alpha^{-1}), \\
    b^2 - ac & = \frac{m}{6} \mathrm{Tr} (2^{1/3}\alpha^{-1}).
  \end{split}
\end{equation}
With this in mind, we will construct a fundamental domain in which we can control these traces.

For $\beta \in K = \mathbb{Q}(2^{1/3})$, let $\beta^{(1)}$ be the real embedding and $\beta^{(j)}$, $j=2,3$, be the complex embeddings. And for $C > 0$ a constant to be determined, set
\begin{equation}
  \label{eq:30}
  \mathcal{D}_1 = \left\{ \beta \in K\ :\ C|N(\beta)|^{1/3} < \beta^{(1)} \leq C\varepsilon^{(1)} |N(\beta)|^{1/3} \right\},
\end{equation}
where $\varepsilon = 1 + 2^{1/3} + 2^{2/3}$ is the fundamental unit. $\mathcal{D}_1$ is clearly a fundamental domain for the action of the units on $K$, and also, perhaps a bit more surprisingly, that if $\beta\in \mathcal{D}_1$ then $2^{1/3}\beta$ is as well.

For $\beta\in K$, we have
\begin{equation}
  \label{eq:31}
  |\beta^{(2)}|^2 = |\beta^{(2)}\beta^{(3)}| = \frac{|N(\beta)|}{|\beta^{(1)}|},
\end{equation}
so for $\beta \in \mathcal{D}_1$, we have
\begin{equation}
  \label{eq:32}
  |\beta^{(2)}| \leq C^{-1/2} N(\beta)^{1/3}.
\end{equation}
Hence
\begin{equation}
  \label{eq:33}
  |\mathrm{Tr}(\beta))| = \left| \beta^{(1)} + \beta^{(2)} + \beta^{(3)} \right| \leq (\varepsilon^{(1)} C + 2C^{-1/2}) N(\beta)^{1/3},
\end{equation}
and
\begin{equation}
  \label{eq:34}
  |\mathrm{Tr}(\beta)| \geq \beta^{(1)} - 2\left|\beta^{(2)}\right| \geq (C - 2C^{-1/2}) N(\beta)^{1/3}.
\end{equation}
So picking $C = 2$, say, gives
\begin{equation}
  \label{eq:35}
  |Tr(\beta)| \asymp N(\beta)^{1/3}
\end{equation}
for $\beta \in \mathcal{D}_1$.

Picking the fundamental domain for $\alpha$ to be
\begin{equation}
  \label{eq:36}
  \mathcal{D} = \left\{ \beta \in K \ :\ \beta^{-1} \in \mathcal{D}_1 \right\},
\end{equation}
we have by (\ref{eq:35}) and (\ref{eq:29}), $a^2 - 2bc \asymp m^{2/3}$. Moreover, applying the comment following (\ref{eq:30}), we can also conclude that $2c^2 - ab$ and $b^2 - ac$ are $\asymp m^{2/3}$ as well.

Together with the previous section, we have proved
\begin{theorem}
  \label{theorem:approximation}
  Let $\nu \pmod m$ be a root of $X^3 \equiv 2 \pmod m$. From lemma 1, let $(\alpha)$ be the corresponding primitive ideal in $\mathbb{Z}[2^{1/3}]$ with $\alpha = a + b2^{1/3} + c2^{2/3}$ in the fundamental domain $\mathcal{D}$ as defined in (\ref{eq:36}). And let $u$, $v$, and $w$ be integers satisfying (\ref{eq:16}). Then the points
  \begin{equation}
    \label{eq:37}
    \left(\frac{bu-cv}{b^2 - ac}, \frac{bv - au}{b^2 -ac}\right), \; \left( \frac{cv - au}{2c^2 - ab}, \frac{2cu - bv}{2c^2 - ab}\right), \; \left(\frac{au - bv}{a^2 - 2bc}, \frac{av - 2cu}{a^2 - 2bc}\right),
  \end{equation}
are all within $\ll \frac{1}{m}$ of the point $\left(\frac{\nu}{m}, \frac{\nu^2}{m}\right)$.
\end{theorem}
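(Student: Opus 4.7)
The plan is to exploit the three scalar identities coming from the first column of (\ref{eq:22}). Comparing the $(1,1)$, $(2,1)$, and $(3,1)$ entries on both sides and dividing by $m$ yields
\[
b(\nu/m) + c(\nu^2/m) = u - a/m, \quad a(\nu/m) + b(\nu^2/m) = v - 2c/m, \quad 2c(\nu/m) + a(\nu^2/m) = w - 2b/m.
\]
Geometrically, this says that $(\nu/m, \nu^2/m)$ lies within a perpendicular distance $\ll 1/m$ of each of the three lines $bX+cY=u$, $aX+bY=v$, $2cX+aY=w$, and hence is close to their pairwise intersections, which are the three points listed in (\ref{eq:24}).

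For each of the three pairs of equations I would apply Cramer's rule and solve explicitly for $(\nu/m, \nu^2/m)$. The three coefficient matrices
\[
\begin{pmatrix} b & c \\ a & b \end{pmatrix}, \quad \begin{pmatrix} b & c \\ 2c & a \end{pmatrix}, \quad \begin{pmatrix} a & b \\ 2c & a \end{pmatrix}
\]
have determinants $b^2-ac$, $ab-2c^2$, and $a^2-2bc$ (up to sign), which are exactly the three quantities whose simultaneous size was arranged in section \ref{sec:fundamentaldomain}. Each pair then produces a main term (the intersection of the two corresponding lines) plus an error of the form $\frac{1}{mD}(N_1, N_2)^T$, where $D$ is one of these determinants and each $N_i$ is another of them (the adjugate entries paired against the $(a/m, 2c/m, 2b/m)$ column). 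The first pair is already handled in (\ref{eq:25})--(\ref{eq:26}); the remaining two pairs are completely analogous computations, and I would simply carry them out.

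To finish, it remains to show each error term is $\ll 1/m$. This is immediate from the conclusion of section \ref{sec:fundamentaldomain}, namely $b^2 - ac \asymp 2c^2 - ab \asymp a^2 - 2bc \asymp m^{2/3}$. Since both the numerator $|N_i|$ and the denominator $|D|$ are $\asymp m^{2/3}$, each error is
\[
\ll \frac{1}{m}\cdot\frac{m^{2/3}}{m^{2/3}} = \frac{1}{m},
\]
exactly as claimed.

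The main obstacle, and essentially the only nontrivial step, is this simultaneous control of all three of $b^2-ac$, $2c^2-ab$, $a^2-2bc$ by $m^{2/3}$. But this has already been achieved in section \ref{sec:fundamentaldomain} by rewriting these quantities via the identity (\ref{eq:28}) as multiples of the traces $\mathrm{Tr}(\alpha^{-1})$, $\mathrm{Tr}(2^{2/3}\alpha^{-1})$, $\mathrm{Tr}(2^{1/3}\alpha^{-1})$, and using that the fundamental domain $\mathcal{D}$ was tailored (via the invariance of $\mathcal{D}_1$ under multiplication by $2^{1/3}$, a feature that relies on $K$ being a pure cubic field) so that all three of these traces are $\asymp N(\alpha^{-1})^{1/3} = m^{-1/3}$. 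With that in hand, the Cramer computation above is routine.
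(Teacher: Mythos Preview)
Your proposal is correct and follows essentially the same route as the paper: extracting the three linear relations from the first column of (\ref{eq:22}), inverting each $2\times 2$ block via Cramer's rule to get a main term plus an error of the shape $\frac{N_i}{mD}$, and then invoking the $\asymp m^{2/3}$ control on $b^2-ac$, $2c^2-ab$, $a^2-2bc$ established in section \ref{sec:fundamentaldomain}. There is no substantive difference between your argument and the paper's own.
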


We close this section by showing that for $\alpha \in \mathcal{D}$, $a$, $b$, and $c$ are all $\ll m^{1/3}$. This turned out not to be necessary for theorem 1, but we will use it in the following section when we investigate the spacing between the points (\ref{eq:37}). There are a number of ways to see this bound, the one we show below is very quick using what we've already done.

Writing
\begin{equation}
  \label{eq:38}
  \alpha = \frac{1}{m} \frac{N\left(\frac{m}{\alpha}\right)}{\frac{m}{\alpha}},
\end{equation}
we can apply (\ref{eq:28}) with $\frac{m}{\alpha}$ taking the role of $\alpha$, which means that $a^2 - 2bc$, $2c^2 - ab$, and $b^2 - ac$ take the roles of $a$, $b$, and $c$ respectively. We obtain
\begin{equation}
  \label{eq:39}
  \begin{split}
    a & = \frac{1}{m} \left( (a^2 - 2bc)^2 - 2(2c^2 - bc) (b^2 - ac) \right) \\
    b & = \frac{1}{m} \left( 2(b^2 - ac)^2 - (a^2 - 2bc)(2c^2 - ab) \right) \\
    c & = \frac{1}{m} \left( (2c^2 - ab)^2 - (a^2 - 2bc)(b^2 - ac) \right), \\
  \end{split}
\end{equation}
which can also be verified directly. Using just that $b^2 - ac$, $2c^2 - ab$, and $a^2 - 2bc$ are all $\ll m^{2/3}$, we can immediately conclude from (\ref{eq:32}) that $a$, $b$, and $c$ are all $\ll m^{1/3}$.

\section{Spacing of Torsion Points on $\mathbb{R}^2 / \mathbb{Z}^2$}
\label{sec:spacing}

As discussed in the introduction, the approximations to $\frac{\nu}{m}$ given by theorem 1, for example
\begin{equation}
  \label{eq:40}
  \frac{\nu}{m} = \frac{bu-cv}{b^2 - ac} + O\left( \frac{1}{m}\right),
\end{equation}
are not optimal in the sense of Dirichlet's theorem on Diophantine approximation. Typically we should be able to find fractions with denominator of size $m^{1/2}$ in an interval of length $\frac{1}{m}$ around $\frac{\nu}{m}$, but the approximation (\ref{eq:40}) only gives the much larger denominator, size $m^{2/3}$. This is in contrast to the analogous approximations to the roots of quadratic congruences outlined in the introduction, where this kind of optimality lead to very strong spacing results that in turn give optimal large sieve inequalities. Moreover, in the quadratic setting, one can easily and uniquely recover the root of the congruence from the approximation. But in our cubic setting, while it seems possible based on some numerical evidence that there is a one-to-one correspondence between the roots and the approximations, we have unfortunately been unable to prove anything close to this.

However, if we consider the approximation to $\frac{\nu}{m}$ and $\frac{\nu^2}{m}$ simultaneously, as we have done in theorem 1, we do recover the kind of optimality suggested by Dirichlet's Diophantine approximation theorem (now simultaneous in two variables). But in contrast to the one-variable approximation, it is not immediate that we can conclude the kind of spacing between roots required for a strong large sieve type inequality. Indeed, it is easy to construct examples of torsion points $\left( \frac{r}{q}, \frac{s}{q} \right)$, $r$, $s$, $q$ coprime (not necessarily pairwise coprime) integers with $q > 0$, on $\mathbb{R}^2/\mathbb{Z}^2$ with torsion $q \leq Q$ that are much closer than the statistically expected $\frac{1}{Q^{3/2}}$. For example, points on the diagonal: $\left(\frac{r}{q}, \frac{r}{q}\right)$ and $\left( \frac{r_1}{q_1}, \frac{r_1}{q_1}\right)$ can be as close as $\frac{\sqrt{2}}{qq_1}$ in the Euclidean metric.

Our goal, then, for this section is to show that the approximations, which are of the form $\left( \frac{r}{q}, \frac{s}{q}\right)$, have the typical spacing from any torsion points with torsion $\ll m^{2/3}$. And towards this end, we develop a characterization of such torsion points.

Let $\left( \frac{r}{q}, \frac{s}{q}\right)$ and $\left( \frac{r_1}{q_1}, \frac{s_1}{q_1}\right)$ be representatives of distinct torsion points in $\mathbb{R}^2/\mathbb{Z}^2$. Let $AX + BY = C$ with $A$, $B$, $C$ integers such that $gcd(A,B,C) = 1$ be the equation of the line between the two. The coprimality condition on the coefficients implies that
\begin{equation}
  \label{eq:41}
  \mathbb{Z}^3 \cap \mathrm{Null} \begin{pmatrix} q & q_1 \\ -r & -r_1 \\ -s & -s_1 \end{pmatrix} = \mathbb{Z}\begin{pmatrix} C & A & B \end{pmatrix}.
\end{equation}

On the other hand the cross product of $\begin{pmatrix} q & -r & -s \end{pmatrix}$ and $\begin{pmatrix} q_1 & - r_1 & -s_1 \end{pmatrix}$ is in this null space, so we can conclude that
\begin{equation}
  \label{eq:42}
  \begin{pmatrix} rs_1 - r_1 s & q s_1 - q_1 s & rq_1 - r_1 q \end{pmatrix} = k \begin{pmatrix} C & A & B \end{pmatrix},
\end{equation}
for some integer $k$. Since the torsion points are distinct, we know that $k\neq 0$, so $|k|\geq 1$ in fact. We have
\begin{equation}
  \label{eq:43}
  \left| \frac{r}{q} - \frac{r_1}{q_1} \right| = \frac{|rq_1 - r_1 q|}{qq_1} \geq \frac{|B|}{qq_1},
\end{equation}
and similarly
\begin{equation}
  \label{eq:44}
  \left| \frac{s}{q} - \frac{s_1}{q_1}\right| \geq \frac{|A|}{qq_1}.
\end{equation}

From (\ref{eq:43}) and (\ref{eq:44}) we see that the size of $|A|$ and $|B|$ from lines $AX + BY = C$ passing through a representative of a torsion point control the spacing from this representative to a representative of any another torsion point. Moreover, we observe that if $AX + BY = C$ passes through a representative $\left( \frac{r}{q}, \frac{s}{q}\right)$, then $AX + BY = C + kA + lB$ passes through another representative $\left( \frac{r}{q} + k, \frac{s}{q} + l\right)$, where $k$ and $l$ are integers. Hence the set of $\begin{pmatrix} A & B \end{pmatrix}$ under consideration will not depend on the choice of representative.

We have
\begin{lemma}
  \label{lemma:spacingtorsionpoints}
  Let $\left(\frac{r}{q}, \frac{s}{q}\right)$ be a torsion point in $\mathbb{R}^2 / \mathbb{Z}^2$. Then the distance, measured by the projection of the Euclidean metric, from $\left(\frac{r}{q}, \frac{s}{q}\right)$ to any different torsion point with torsion $\leq Q$ will be at least
  \begin{equation}
    \label{eq:45}
    \frac{1}{qQ} \min \left\{ \sqrt{A^2 + B^2} \ :\ A\frac{r}{q} + B\frac{s}{q} \in \mathbb{Z} \right\}.
  \end{equation}

\end{lemma}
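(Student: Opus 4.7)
The plan is to repackage the line-of-sight computation already developed in the paragraphs preceding the lemma. I would fix an arbitrary second torsion point $P_1 = \left(\frac{r_1}{q_1}, \frac{s_1}{q_1}\right)$ distinct from $P = \left(\frac{r}{q}, \frac{s}{q}\right)$, with torsion $q_1 \leq Q$, and lift both to representatives in $\mathbb{R}^2$ realizing the minimal toroidal Euclidean separation. Let $AX + BY = C$ be the unique integral line through these lifts with $\gcd(A, B, C) = 1$.

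Next I would invoke (\ref{eq:42}), which provides a nonzero integer $k$ with $(rs_1 - r_1 s,\ qs_1 - q_1 s,\ rq_1 - r_1 q) = k(C, A, B)$. The last two coordinates directly give $P - P_1 = \frac{k}{qq_1}(B, -A)$ as vectors in $\mathbb{R}^2$, whose Euclidean length is $|k|\sqrt{A^2 + B^2}/(qq_1)$. Using $|k| \geq 1$ and $q_1 \leq Q$, this is bounded below by $\sqrt{A^2 + B^2}/(qQ)$.

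To finish, I would observe that by construction $A\frac{r}{q} + B\frac{s}{q} = C \in \mathbb{Z}$, so the pair $(A, B)$ lies in the set indexing the minimum in (\ref{eq:45}); moreover $(A, B) \neq (0, 0)$ because $\gcd(A, B, C) = 1$ precludes the empty ``line'' $0 = \pm 1$. Since $P_1$ was arbitrary among torsion points with torsion at most $Q$, taking the infimum over admissible $(A, B)$ (which, as already remarked just above the lemma, does not depend on the representative of $P$ since shifting the representative only alters $C$) yields the claimed lower bound.

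I do not anticipate any substantive obstacle here: the lemma is essentially a clean formulation of what the preceding discussion establishes, and the only care needed is the elementary bookkeeping around the choice of representative and the non-triviality of $(A, B)$. The only mildly delicate point is verifying that the closest pair of lifts actually lie on an integral line with $\gcd(A, B, C) = 1$ — this follows from (\ref{eq:41}), since the coprimality condition on $(C, A, B)$ is forced by it being a generator of the rank-one lattice appearing there.
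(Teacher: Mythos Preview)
Your proposal is correct and follows essentially the same approach as the paper: the lemma is stated as a summary of the preceding discussion, and your argument simply repackages (\ref{eq:41})--(\ref{eq:44}) into the vector identity $P - P_1 = \frac{k}{qq_1}(B,-A)$ before taking the minimum. The only cosmetic difference is that you compute the displacement vector directly, whereas the paper records the two coordinate bounds (\ref{eq:43}) and (\ref{eq:44}) separately; both lead to the same Euclidean estimate.
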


Before applying this lemma to our approximations to roots of $x^3 \equiv 2 \pmod m$, we remark that the set of all $\begin{pmatrix} A & B \end{pmatrix}$ such that some fixed representative $\left( \frac{r}{q}, \frac{s}{q}\right)$ lies on a line $AX + BY = C$ forms a sublattice of $\mathbb{Z}^2$. As mentioned previously, this lattice is independent of the representative $\left( \frac{r}{q}, \frac{s}{q} \right)$ chosen for the torsion point $\mathbb{R}^2 / \mathbb{Z}^2$. And if this lattice, properly oriented and normalized to have co volume $1$, does not lie too high in the cusp of $SL_2(\mathbb{Z}) \backslash SL_2(\mathbb{R})$, then the shortest vector in the lattice will have norm about the square root of the covolume.

Let's consider the point $\left( \frac{bu - cv}{b^2 - ac}, \frac{bv - au}{b^2 - ac} \right)$, the first approximation to $\left( \frac{\nu}{m}, \frac{\nu^2}{m}\right)$ listed in (\ref{eq:37}) of theorem 1. From the way it was constructed, as the intersection of the lines $bX + cY = u$ and $aX + bY = v$, we can see that the lattice discussed in the previous paragraph is
\begin{equation}
  \label{eq:46}
  \mathrm{Span}_\mathbb{Z} \left\{ \begin{pmatrix} b & c \end{pmatrix}, \begin{pmatrix} a & b \end{pmatrix} \right\}.
\end{equation}
To get this we use that the row vectors $\begin{pmatrix} u & b & c \end{pmatrix}$ and $\begin{pmatrix} v & a & b \end{pmatrix}$ can be completed by a third vector to make a matrix in $SL_3(\mathbb{Z})$, namely $\gamma$, to see that we are not missing any integral lines through the point $\left( \frac{bu - cv}{b^2 - ac}, \frac{bv - au}{b^2 - ac} \right)$. The covolume of this lattice is $b^2 - ac$, which we forced in the previous section to be $\asymp m^{2/3}$. Recall that in that section we also verified that $a$, $b$, and $c$ are all $\ll m^{1/3}$. And from this, we also have
\begin{equation}
  \label{eq:47}
  b^2 - ac \leq \sqrt{(b^2 + c^2)(a^2 + b^2)} \ll m^{1/3} \sqrt{b^2 + c^2},
\end{equation}
so
\begin{equation}
  \label{eq:48}
  \sqrt{b^2 + c^2} \asymp m^{1/3},
\end{equation}
and similarly
\begin{equation}
  \label{eq:49}
  \sqrt{a^2 + b^2} \asymp m^{1/3}.
\end{equation}

Suppose we scale and rotate this lattice so that the vector $\begin{pmatrix} b & c \end{pmatrix}$ becomes $\begin{pmatrix} 1 & 0 \end{pmatrix}$, thereby identifying the lattice with a point in the upper halfplane $\mathbb{H}$, the image of $\begin{pmatrix} a & b \end{pmatrix}$ under this scaling and rotation. After this transformation, the covolume of the lattice is $\gg 1$, whence the point in $\mathbb{H}$ has height$\gg 1$ above the $x$-axis. Moreover, since $\sqrt{a^2 + b^2} \asymp \sqrt{b^2 + c^2}$, the point also has distance $\ll 1$ from the origin. As such, the point lies in a fixed, compact region of $\mathbb{H}$, whence, even after quotienting out by the action of $SL_2(\mathbb{Z})$ on the basis, the lattice lies in a fixed region, bounded away from the cusp.

In accordance with the remarks above, we know that the shortest vector in the lattice will have norm $\asymp$ square root of the covolume, so here the shortest vector will be $\asymp m^{1/3}$. Combining this with lemma 3, and applying the same reasoning to all three approximations to $\left( \frac{\nu}{m}, \frac{\nu^2}{m} \right)$ listed in (\ref{eq:37}) of theorem 1, we have
\begin{lemma}
  \label{lemma:spacingapproximations}
  The approximations to $\left(\frac{\nu}{m}, \frac{\nu^2}{m}\right)$ found in theorem 1, namely
  \begin{equation}
    \label{eq:50}
    \left(\frac{bu-cv}{b^2 - ac}, \frac{bv - au}{b^2 -ac}\right), \; \left( \frac{cv - au}{2c^2 - ab}, \frac{2cu - bv}{2c^2 - ab}\right), \; \left(\frac{au - bv}{a^2 - 2bc}, \frac{av - 2cu}{a^2 - 2bc}\right),
  \end{equation}
  are all spaced by $\gg \frac{1}{m^{1/3}Q}$ from any other torsion point $\left(\frac{r}{q}, \frac{s}{q}\right)$ in $\mathbb{R}^2/ \mathbb{Z}^2$ with torsion $q \leq Q$.  
\end{lemma}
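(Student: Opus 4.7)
The plan is to apply Lemma~\ref{lemma:spacingtorsionpoints} to each of the three approximation points, so the task reduces to lower bounding, for each approximation, the shortest nonzero integer vector $(A, B)$ with $A \frac{r}{q} + B \frac{s}{q} \in \mathbb{Z}$, where $(r/q, s/q)$ denotes the approximation.

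I focus on the first approximation $P_1 = \left( \frac{bu-cv}{b^2-ac},\, \frac{bv-au}{b^2-ac} \right)$; the other two are analogous. The lines $bX + cY = u$ and $aX + bY = v$ both pass through $P_1$, so $(b, c)$ and $(a, b)$ lie in the lattice $L$ of admissible $(A, B)$. To show they span $L$, I would use that the first two rows of $\gamma$ are $(u, b, c)$ and $(v, a, b)$, and $\gamma \in SL_3(\mathbb{Z})$. Expanding $\det \gamma = 1$ along the third row $(w, 2c, a)$ yields
\[ w(b^2 - ac) - 2c(bu - cv) - a(bv - au) = 1, \]
so $\gcd(b^2 - ac,\, bu - cv,\, bv - au) = 1$. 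This forces $[\mathbb{Z}^2 : L] = |b^2 - ac|$, which coincides with the covolume of $\mathrm{Span}_\mathbb{Z}\{(b,c),(a,b)\}$, proving equality.

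The key step is then to estimate the shortest vector in $L$. From Section~\ref{sec:fundamentaldomain}, $|b^2 - ac| \asymp m^{2/3}$ and $a, b, c \ll m^{1/3}$, so both generators $(b, c)$ and $(a, b)$ have Euclidean norm $\ll m^{1/3}$. Combined with the lower bound forced by the covolume via the Cauchy--Schwarz computation in (\ref{eq:47})--(\ref{eq:49}), both generators in fact have norm $\asymp m^{1/3}$. Rescaling so the lattice has covolume one identifies $L$, up to $SL_2(\mathbb{Z})$-equivalence, with a point of $\mathbb{H}$ in a fixed compact region bounded away from the cusp; by standard reduction theory the shortest vector of $L$ has norm $\asymp \sqrt{|b^2 - ac|} \asymp m^{1/3}$.

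Plugging into Lemma~\ref{lemma:spacingtorsionpoints} gives spacing $\gg \frac{m^{1/3}}{|b^2 - ac|\, Q} \asymp \frac{1}{m^{1/3} Q}$. The two remaining approximations are treated identically with $\mathrm{Span}_\mathbb{Z}\{(2c, a), (b, c)\}$ and $\mathrm{Span}_\mathbb{Z}\{(a, b), (2c, a)\}$ replacing $\mathrm{Span}_\mathbb{Z}\{(b,c),(a,b)\}$; the primitivity check uses the same expansion of $\det \gamma = 1$ along a different row, and the balance of norms follows from the same estimates of Section~\ref{sec:fundamentaldomain}. I expect the shortest-vector estimate to be the main obstacle in the sense that it is the step that genuinely requires the choice of fundamental domain $\mathcal{D}$; everything else is a bookkeeping consequence of $\gamma \in SL_3(\mathbb{Z})$.
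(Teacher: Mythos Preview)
Your proposal is correct and follows essentially the same route as the paper: identify the lattice $L$ as $\mathrm{Span}_{\mathbb{Z}}\{(b,c),(a,b)\}$ via the $SL_3(\mathbb{Z})$-structure of $\gamma$, use the estimates from Section~\ref{sec:fundamentaldomain} to see that both generators have norm $\asymp m^{1/3}$ while the covolume is $\asymp m^{2/3}$, conclude that the rescaled lattice sits in a compact part of $SL_2(\mathbb{Z})\backslash\mathbb{H}$ so the shortest vector is $\asymp m^{1/3}$, and feed this into Lemma~\ref{lemma:spacingtorsionpoints}. Your verification that $L$ equals the span (via the cofactor expansion yielding $\gcd(b^2-ac,\,bu-cv,\,bv-au)=1$ and the matching index count) is a bit more explicit than the paper's one-line appeal to completing the first two rows of $\gamma$ to an $SL_3(\mathbb{Z})$-matrix, but it is the same argument unpacked.
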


Having this lemma, we can prove spacing results for the set of points
\begin{equation}
  \label{eq:51}
  S = \left\{ \left( \frac{\nu}{m}, \frac{\nu^2}{m} \right) \ :\ \nu^3 \equiv 2 \pmod m,\ M < m \leq 2M \right\}.
\end{equation}
Specifically, we have the following theorem, whose proof is below,
\begin{theorem}
  \label{theorem:spacingroots}
  For any disc $D$ in $\mathbb{R}^2 / \mathbb{Z}^2$ with radius $\frac{1}{M}$,
  \begin{equation}
    \label{eq:52}
    \# (S \cap D)  \ll 1,
  \end{equation}
  where $S$ is as in (\ref{eq:51}). 
\end{theorem}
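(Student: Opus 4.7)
The plan is to control $\#(S \cap D)$ by associating to each point of $S \cap D$ the first of its three approximations from theorem \ref{theorem:approximation}, then bounding both the number of distinct such approximations and the number of elements of $S$ realizing any given one.

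For each $\left( \frac{\nu}{m}, \frac{\nu^2}{m}\right) \in S \cap D$, theorem \ref{theorem:approximation} provides an approximation point $p = \left( \frac{bu - cv}{b^2 - ac}, \frac{bv - au}{b^2 - ac} \right)$ within $\ll 1/m \ll 1/M$ of the point, so every such $p$ lies in the enlargement $D'$ of $D$ by $O(1/M)$. Each $p$ is a torsion point of $\mathbb{R}^2 / \mathbb{Z}^2$ whose torsion equals the covolume $b^2 - ac \asymp M^{2/3}$ of the lattice $L_p = \mathrm{Span}_{\mathbb{Z}}\{(b,c),(a,b)\}$ from (\ref{eq:46}). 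Applying lemma \ref{lemma:spacingapproximations} with $Q \asymp M^{2/3}$ and $m \asymp M$, two such approximations coming from distinct elements of $S$, if distinct as torsion points, are separated by $\gg 1/M$. A volume count then shows that $D'$ contains at most $O(1)$ distinct such approximation points.

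To bound the number of elements of $S$ realizing a given first approximation $p$, I observe that the vectors $(b,c)$ and $(a,b)$ both lie in $L_p$, and each has Euclidean norm $\asymp M^{1/3}$ by (\ref{eq:48}) and (\ref{eq:49}). Since $L_p$ has covolume $\asymp M^{2/3}$ and its shape, normalized to covolume one, sits in a fixed compact region of $SL_2(\mathbb{Z}) \backslash \mathbb{H}$ (as established in section \ref{sec:spacing}), the number of vectors in $L_p$ of norm $\ll M^{1/3}$ is $O(1)$, leaving at most $O(1)$ candidate triples $(a,b,c)$. By lemma \ref{lemma:parametrization} each such triple determines a unique pair $(m, \nu \pmod m)$ and hence at most one element of $S$. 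Multiplying the two bounds yields $\#(S \cap D) \ll 1$. The step I expect to require the most care is the short-vector count: the compactness-in-$SL_2(\mathbb{Z}) \backslash \mathbb{H}$ property was verified in section \ref{sec:spacing} only for lattices attached to a fixed $(a,b,c) \in \mathcal{D}$, and one must confirm that it depends only on $p$, so that the bound applies uniformly over all triples producing a given $p$; this is automatic since any $p$ in question \emph{is} the first approximation of some such triple.
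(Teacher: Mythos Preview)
Your argument is correct and rests on the same two inputs as the paper's (theorem \ref{theorem:approximation} and lemma \ref{lemma:spacingapproximations}), but the bookkeeping is organized differently. The paper keeps all three approximation points in play: from the matrix identity (\ref{eq:53}) the triple of approximations determines $(a,b,c)$ and hence $(m,\nu)$, so $\#(S\cap D)$ is bounded by the \emph{cube} of the number of approximation points landing in an $O(1/M)$-disc, which is $O(1)$ by lemma \ref{lemma:spacingapproximations}. You work with only the first approximation $p$ and bound the fibre over a fixed $p$ directly, using that any triple $(a,b,c)$ producing $p$ contributes two vectors $(b,c),(a,b)$ of norm $\asymp M^{1/3}$ to the intrinsic lattice $L_p$; the well-roundedness established in section \ref{sec:spacing} then forces there to be only $O(1)$ such vectors, hence $O(1)$ triples. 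Your route avoids both the cube and the matrix-inversion recovery step, at the price of invoking the lattice-shape discussion a second time for the short-vector count; the paper's route is slightly more self-contained at this stage but pays the harmless cube. Your closing remark that the compactness property depends only on $p$ (since $L_p$ is intrinsic and some admissible triple does produce $p$) is exactly the point needed to make the short-vector bound uniform over the fibre.
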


\begin{proof}
  First we show that we can recover the point $\left( \frac{\nu}{m}, \frac{\nu^2}{m}\right)$ from the three approximations of theorem 1. We have
  \begin{equation}
    \label{eq:53}
    \begin{pmatrix} a^2 - 2bc & 2c^2 - ab & b^2 - ac \\ bw - av & au - cw & cv - bu \\ 2cv - aw & bw - 2cu & au - bv \end{pmatrix} = \begin{pmatrix} u & b & c \\ v & a & b \\ w & 2c & a \end{pmatrix}^{-1},
  \end{equation}
  from which we can observe two facts. For one, the numbers $b^2 - ac$, $cv - bu$, and $au - bv$, as a column of a matrix in $SL_3(\mathbb{Z})$, are coprime, so we can recover these numbers from the fractions $\frac{bu - cv}{b^2 - ac}$ and $\frac{bv - au}{b^2 - ac}$. And two, applying the first observation to the fractions in the other approximations to recover the other columns, we can get $a$, $b$, and $c$ by inverting the matrix in (\ref{eq:53}). From $a$, $b$, and $c$, we can recover $m$ and $\nu$ from the ideal generated by $a + b2^{1/3} + c 2^{2/3}$, or, what is probably more straightforward, we can use the matrix just obtained to get $\nu$ and $m$ directly by using (\ref{eq:11}) or lemma 2.

  By theorem 1, we know that each point in $S \cap D$ will have all three approximations in a potentially larger disc, but still with radius $\ll \frac{1}{M}$. Now conversely, by the observations in the previous paragraph, any three such approximations will determine $\leq 1$ points in $S\cap D$. Hence we can bound the number of points in $S\cap D$ by the cube of the number of approximations in a disc of radius $\ll \frac{1}{M}$.

  By lemma 4, each of our approximations to a point in $S$ will have a disc of radius $\gg \frac{1}{M}$ in which no other approximation will lie, here the relevant $Q$ is $\ll M^{2/3}$. Hence the number of our approximations in any disc of radius $\ll \frac{1}{M}$ is $\ll 1$, and the theorem follows.
\end{proof}

\section{Large Sieve Type Inequality}
\label{sec:largesieve}

Almost as a corollary to theorem 2, we can deduce the following large sieve type inequality,
\begin{theorem}
  \label{theorem:largesieve}
  For any sequence of complex numbers $a_{k,l}$ supported in positive integers $k\leq K$ and $l \leq L$, we have
  \begin{equation}
    \label{eq:54}
    \begin{aligned}
      \sum_{M < m \leq 2M} \sum_{\nu^3 \equiv 2 (m)} \left| \sum_k \sum_l a_{k,l} e\left( \frac{k\nu + l \nu^2}{m} \right) \right|^2 \\
      \ll (M + K)(M+ L) \sum_k\sum_l |a_{k,l}|^2.
    \end{aligned}
  \end{equation}

\end{theorem}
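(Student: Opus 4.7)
The plan is to deduce Theorem \ref{theorem:largesieve} from the spacing result Theorem \ref{theorem:spacingroots} by a standard transfer argument. Let $S(x,y) = \sum_{k \leq K, l \leq L} a_{k,l} e(kx + ly)$, and, for each pair $(m,\nu)$ with $M < m \leq 2M$ and $\nu^3 \equiv 2 \pmod m$, set $p_{m,\nu} = \left(\frac{\nu}{m}, \frac{\nu^2}{m}\right) \in \mathbb{R}^2 / \mathbb{Z}^2$. The left side of (\ref{eq:54}) is then $\sum_{m,\nu} |S(p_{m,\nu})|^2$.

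First I would use Theorem \ref{theorem:spacingroots} to replace the collection $\{p_{m,\nu}\}$ by $O(1)$ genuinely well-spaced subfamilies. Forming the graph on $\{p_{m,\nu}\}$ whose edges connect pairs at distance $\leq \frac{1}{M}$, Theorem \ref{theorem:spacingroots} implies that the maximum degree is $O(1)$. A greedy coloring thus decomposes $\{p_{m,\nu}\}$ into a bounded number of independent sets, each of which is spaced by $\gg \frac{1}{M}$ in $\mathbb{R}^2/\mathbb{Z}^2$. By the triangle inequality it suffices to prove (\ref{eq:54}) with the sum restricted to any single such well-spaced subfamily.

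Next I would invoke the two-dimensional large sieve inequality: for a $\delta$-spaced set $\{p_r\} \subset \mathbb{R}^2 / \mathbb{Z}^2$ and any trigonometric polynomial $S$ as above,
\begin{equation*}
  \sum_r |S(p_r)|^2 \ll (K + \delta^{-1})(L + \delta^{-1}) \sum_{k,l} |a_{k,l}|^2.
\end{equation*}
This bound can be derived by duality together with a Selberg--Beurling majorant applied in each variable, in exact analogy with the Montgomery--Vaughan proof of the classical one-dimensional large sieve; alternatively it follows from a two-dimensional Gallagher-type mean-value estimate applied to $S$ on small squares around each $p_r$. With $\delta \asymp \frac{1}{M}$ furnished by Theorem \ref{theorem:spacingroots}, summing the resulting bound over the $O(1)$ subfamilies yields (\ref{eq:54}).

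The only real bookkeeping is in ensuring the two-dimensional large sieve constant comes out as $(K + \delta^{-1})(L + \delta^{-1})$ rather than a less favorable combination, but this is by now entirely classical. All nontrivial content has been concentrated into the spacing statement Theorem \ref{theorem:spacingroots}; from there the passage to the large sieve inequality is purely formal, which is why I expect no serious obstacle in this step.
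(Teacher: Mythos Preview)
Your argument is correct and lands on the same bound; the difference from the paper is organizational rather than substantive. You first flatten the spacing input by a greedy coloring, producing $O(1)$ genuinely $\tfrac{1}{M}$-separated subfamilies, and then invoke the two-dimensional large sieve $(K+\delta^{-1})(L+\delta^{-1})$ as a black box (which is indeed available once the points are separated in the $\ell^\infty$ sense, as Euclidean $\delta$-spacing guarantees). The paper instead works the large sieve out inline: it dualizes, inserts a smooth majorant with compactly supported Fourier transform in each of $k$ and $l$, applies Poisson summation, and is left with counting how many points $\bigl(\tfrac{\nu_1}{m_1},\tfrac{\nu_1^2}{m_1}\bigr)$ fall in an $O(\tfrac{1}{K})\times O(\tfrac{1}{L})$ rectangle; that rectangle is covered by $\ll(\tfrac{M}{K}+1)(\tfrac{M}{L}+1)$ discs of radius $\tfrac{1}{M}$, and Theorem~\ref{theorem:spacingroots} bounds each by $O(1)$. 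So the paper uses the ``$O(1)$ points in every small disc'' formulation of the spacing directly, without passing through well-spaced subfamilies, whereas you trade the coloring step for the convenience of quoting a standard inequality. Your version is a bit more modular; the paper's is self-contained. Either way, as you say, all the content sits in Theorem~\ref{theorem:spacingroots} and the passage to Theorem~\ref{theorem:largesieve} is routine.
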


Before getting to the proof, we remark that the inequality is optimal, up to the implied constant, if $K$ and $L$ are both $\geq M$, since in this case the right hand side is
\begin{equation}
  \label{eq:55}
  \ll KL \sum_k\sum_l |a_{k,l}|^2 ,
\end{equation}
which is the size of just one term of the $m$ and $\nu$ sums if we choose
\begin{equation}
  \label{eq:56}
  a_{k,l} = e\left( \frac{-k\nu_0 + -l\nu_0^2}{m_o} \right)
\end{equation}
for a fixed $m_0$ and $\nu_0$. But also, since the length of the sum over $m$ and $\nu$ is $\ll M$ and whence the trivial bound from Cauchy's inequality is
\begin{equation}
  \label{eq:57}
  KLM \sum_k\sum_l |a_{k,l}|^2,
\end{equation}
our theorem gives worse than trivial in the regime $KL \leq M$. 

\begin{proof}
  Utilizing the duality principle, we see that it is enough for our purpose to prove that for any sequence of complex numbers $b_{m,\nu}$,
  \begin{equation}
    \label{eq:58}
    \begin{aligned}
      \sum_{k\leq K}\sum_{l\leq L} \left| \sum_{M < m \leq 2M} \sum_{\nu^3 \equiv 2 (m)} b_{m,\nu} e\left( \frac{k\nu + l\nu^2}{m} \right) \right|^2 \quad \\
      \ll (M + K)(M + L) \sum_{M < m\leq 2M} \sum_{\nu^3 \equiv 2 (m)} |b_{m,\nu}|^2.
    \end{aligned}
  \end{equation}

  Let $f: \mathbb{R} \to \mathbb{R}$ be a smooth function such that $f(x)\geq 0$ for all $x$, $f(x) \geq 1$ for $0 \leq x \leq 1$, and $\hat{f}$, the Fourier transform of $f$, is compactly supported. Then the left hand side of (\ref{eq:58}) is
  \begin{equation}
    \label{eq:59}
    \leq \sum_k f\left(\frac{k}{K}\right) \sum_l f\left( \frac{l}{L}\right) \left| \sum_{M < m \leq 2M} \sum_{\nu^3 \equiv 2 (m)} b_{m,\nu} e\left( \frac{k\nu + l\nu^2}{m} \right) \right|^2.
  \end{equation}

  Expanding out the square, (\ref{eq:59}) becomes
  \begin{equation}
    \label{eq:60}
    \underset{\substack{ M < m \leq 2M \\ \nu^3 \equiv 2 (m)}}{\sum\sum} \underset{\substack{ M < m_1 \leq 2M \\ \nu_1^3 \equiv 2 (m_1)}}{\sum\sum} b_{m,\nu} \overline{b}_{m_1,\nu_1} \mathcal{B}(m,\nu,m_1,\nu_1)\mathcal{B}'(m,\nu,m_1,\nu_1),
  \end{equation}
  where
  \begin{equation}
    \label{eq:61}
    \mathcal{B}(m,\nu,m_1,\nu_1) = \sum_k f\left(\frac{k}{K}\right) e\left( k \left(\frac{\nu}{m} - \frac{\nu_1}{m_1}\right)\right),
  \end{equation}
  and
  \begin{equation}
    \label{eq:62}
    \mathcal{B}'(m,\nu,m_1,\nu_1) = \sum_l f\left(\frac{l}{L}\right) e\left(l\left( \frac{\nu^2}{m} - \frac{\nu_1^2}{m_1}\right)\right).
  \end{equation}

  Applying Poisson summation to (\ref{eq:61}), we have
  \begin{equation}
    \label{eq:63}
    \mathcal{B}(m,\nu,m_1,\nu_1) = K \sum_k \hat{f} \left( K\left(k - \left(\frac{\nu}{m} - \frac{\nu_1}{m_1} \right)\right)\right).
  \end{equation}
  Now, by the compact support of $\hat{f}$, only $k$ for which
  \begin{equation}
    \label{eq:64}
    \left| k - \left( \frac{\nu}{m} - \frac{\nu_1}{m_1} \right)\right| \ll \frac{1}{K}
  \end{equation}
  will contribute to the sum in (\ref{eq:63}). If $K\gg 1$, then at most one $k$ will appear, and even then, only when
  \begin{equation}
    \label{eq:65}
    \left|\left| \frac{\nu}{m} - \frac{\nu_1}{m_1}\right|\right| \ll \frac{1}{K},
  \end{equation}
  where we use $|| \cdot ||$ to denote the distance to the nearest integer, that is the metric on $\mathbb{R}/ \mathbb{Z}$. Hence for $K \gg 1$, we have
  \begin{equation}
    \label{eq:66}
    \mathcal{B}(m,\nu,m_1,\nu_1) \ll K \mathbbm{1}_{ \left|\left| \frac{\nu}{m} - \frac{\nu_1}{m_1}\right|\right| \ll \frac{1}{K}}.
  \end{equation}
  In fact, this bound clearly works for all $K$, perhaps by adjusting the implied constants.

  By the same reasoning, we have the similar bound for $\mathcal{B}'$,
  \begin{equation}
    \label{eq:67}
    \mathcal{B}'(m,\nu,m_1,\nu_1) \ll L \mathbbm{1}_{\left|\left| \frac{\nu^2}{m} - \frac{\nu_1^2}{m_1} \right|\right| \ll \frac{1}{L}}. 
  \end{equation}
  Hence the left hand side of (\ref{eq:58}) is
  \begin{equation}
    \label{eq:68}
    \ll KL \underset{ \substack{ M < m, m_1 \leq 2M \\ \nu^3 \equiv 2(m),\ \nu_1^3 \equiv 2 (m_1) \\ \left|\left| \frac{\nu}{m} - \frac{\nu_1}{m_1}\right|\right| \ll \frac{1}{K},\ \left|\left| \frac{\nu^2}{m} - \frac{\nu_1^2}{m_1} \right|\right| \ll \frac{1}{L}}}{\sum\sum\sum\sum} |b_{m,\nu} b_{m_1,\nu_1}|.
  \end{equation}

  Applying $|b_{m,\nu} b_{m_1,\nu_1}| \leq \frac{1}{2} |b_{m,\nu}|^2 + \frac{1}{2} |b_{m_1,\nu_1}|^2 $ and exploiting the symmetry between $m, \nu$ and $m_1, \nu_1$, we see that (\ref{eq:68}) is
  \begin{equation}
    \label{eq:69}
    \leq KL \sum_{M < m \leq 2M} \sum_{\nu^3 \equiv 2 (m)} |b_{m,\nu}|^2 \underset{\substack{ M < m_1 \leq 2M,\ \nu_1^3 \equiv 2 (m_1) \\ \left( \frac{\nu_1}{m_1}, \frac{\nu_1^2}{m_1} \right) \in R_{m,\nu}}}{\sum\sum} 1,
  \end{equation}
  where $R_{m,\nu}$ is a $O\left( \frac{1}{K} \right) \times O\left( \frac{1}{L} \right)$ rectangle in $\mathbb{R}^2 / \mathbb{Z}^2$. 

  We can cover the rectangle $R_{m,\nu}$ by $\ll \left( \frac{M}{K} + 1\right) \left( \frac{M}{L} + 1\right)$ discs of radius $\frac{1}{M}$, and in each of these discs there are $\ll 1$ points $\left( \frac{\nu_1}{m_1}, \frac{\nu_1^2}{m_1}\right)$, according to theorem 2. We have that (\ref{eq:69}) is then
  \begin{equation}
    \label{eq:70}
    \ll KL \left( \frac{M}{K} + 1\right) \left( \frac{M}{L} + 1 \right) \sum_{M < m \leq 2M} \sum_{\nu^3 \equiv 2 (m)} |b_{m,\nu}|^2,
  \end{equation}
  from which (\ref{eq:59}), and hence theorem 3, follows.
\end{proof}

\bibliographystyle{plain}
\bibliography{./references}

\end{document}